\documentclass[12pt]{article}

\usepackage{amsmath}
\usepackage{amssymb}
\usepackage{amsthm}
\usepackage{amsfonts}
\usepackage{amscd}
\usepackage{graphicx}
\usepackage[numbers,sort&compress]{natbib}
\usepackage{longtable}
\usepackage{hyperref}
\usepackage{makeidx}
\usepackage[nottoc]{tocbibind}  % index and bibl. in toc
\usepackage[usenames,dvipsnames]{color}
\usepackage{textcomp}
\usepackage{mathrsfs}
\usepackage{array}
\usepackage{marvosym}
\usepackage{enumitem} % http://tex.stackexchange.com/questions/86054/how-to-remove-the-whitespace-before-itemize-enumerate [noitemsep,topsep=0pt]

\usepackage{setspace}
\onehalfspacing
\setlength{\parskip}{1.3ex plus 0.2ex minus 0.2ex}      % Dutch paragraph style
\setlength{\parindent}{0pt}                             % Dutch paragraph style
\setcounter{tocdepth}{1}

\hypersetup{
    colorlinks=true,
    breaklinks=true,
    filecolor=blue,
    urlcolor=Violet,
    linkcolor=blue,
    citecolor=Green,
    anchorcolor=blue,
    frenchlinks=true,
    pdfborder={0 0 0},
    pdfpagelayout=TwoPageRight,
    pdfdisplaydoctitle=true,
    bookmarksnumbered=true
}

\theoremstyle{plain}

\newtheorem{theorem}{Theorem}

\theoremstyle{definition}

\newtheorem{remark}{Remark}
\newtheorem{example}{Example}
\newtheorem{algorithm}{Algorithm}

\newcounter{figcnt}
\renewenvironment{figure}{\refstepcounter{figcnt}\begin{center}}{\end{center}}

\bibliographystyle{plainnat}

\makeindex
    % special commands
    
     % doesn't work in math mode without textrm

    \newcommand{\Mend}{\hfill \ensuremath{\vartriangleleft}}

    % functions wich assign numbers or infinity to algebraic structures

    % parts of ring elements

    % arithmetic over rings
    \newcommand{\Msum}[2]{\ensuremath{\overset{#2}{\underset{#1}{\sum}}}}

    % algebraic structures
     % group
     % algebra
    \newcommand{\Mmod}[1]{\langle #1\rangle} % module

    % sets
    \newcommand{\Mset}[2]{\ensuremath{\{~ #1 ~|~ #2 ~\}}}

    % logic

    % abbrevations
    \newcommand{\Miff}{if and only if }

    \newcommand{\Mst}{{such that }}
    
    \newcommand{\Mresp}{respectively}
    \newcommand{\Mwrt}{with respect to }

    % categories
     % objects in category
     % morphisms in category

    % limits

    % algebro geometric invariants

    % relations
    \newcommand{\Marrow}[3]{\ensuremath{#1\stackrel{#2}{\longrightarrow}#3}}
    
    \newcommand{\Mdasharrow}[3]{\ensuremath{#1\stackrel{#2}{\dashrightarrow}#3}}
    
    \newcommand{\Mrow}[3]{\ensuremath{#1\colon #2\longrightarrow #3}}
    
    \newcommand{\Mdashrow}[3]{\ensuremath{#1\colon #2\dashrightarrow #3}}

    % figures
    \newcommand{\Mfig}[3]{{\includegraphics[height=#1cm, width=#2cm]{img/#3}}}
    %\newcommand{\Mfig}[3]{{\textbf{X}}}

    % definitions
    %\newcommand{\Mdef}[1]{\textit{\textbf{#1}}\index{#1}}
    %\newcommand{\MdefAttr}[2]{\textit{\textbf{#1}}\index{#2!#1}\index{#1}}
    \newcommand{\Mdef}[1]{\textit{#1}\index{#1}}
    \newcommand{\MdefAttr}[2]{\textit{#1}\index{#2!#1}\index{#1}}

    % Lie algebra

    %\newcommand{\Mh}{{\Mmfh}}
    %\newcommand{\Mg}{{\Mmfg}}
    %\newcommand{\Ma}{{\Mmfa}}
    
    % neron-severi lattice
    \newcommand{\Me}{e}
    \newcommand{\Mk}{k}
    
    \newcommand{\Mh}{h}

    % letters
    %\newcommand{\MI}{{\mathfrak{i}}}
    %\newcommand{\Mid}{{\textbf{1}}}
    %\newcommand{\Mi}{{\textbf{i}}}
    %\newcommand{\Mj}{{\textbf{j}}}
    %\newcommand{\Mk}{{\textbf{k}}}
    %\newcommand{\Me}{{\textbf{e}}}
       
    \newcommand{\McalA}{{\mathcal{A}}}\newcommand{\McalB}{{\mathcal{B}}}\newcommand{\McalG}{{\mathcal{G}}}\newcommand{\McalH}{{\mathcal{H}}}\newcommand{\McalP}{{\mathcal{P}}}\newcommand{\McalR}{{\mathcal{R}}}\newcommand{\McalS}{{\mathcal{S}}}
    \newcommand{\MbbP}{{\mathbb{P}}}\newcommand{\MbbR}{{\mathbb{R}}}\newcommand{\MbbS}{{\mathbb{S}}}\newcommand{\MbbZ}{{\mathbb{Z}}}

    % for the moebius automorphisms

    % references

    \newcommand{\EQN}[1]{{(\ref{eqn:#1})}}
    \newcommand{\SEC}[1]{{\textsection\ref{sec:#1}}}
    \newcommand{\FIG}[1]{{Figure~\ref{fig:#1}}}
    \newcommand{\ALG}[1]{{Algorithm~\ref{alg:#1}}}
    
    \newcommand{\EXM}[1]{{Example~\ref{exm:#1}}}

\title{Computing curves on real rational surfaces}

\author{Niels Lubbes}

\date{\today}

\begin{document}

\maketitle

\begin{abstract}
We present an algorithm for computing curves and families of curves 
of prescribed degree and geometric genus on real rational surfaces.

{\bf Keywords:} rational surfaces, families of curves, linear series

{\bf MSC2010:} 14Q10, 14D99, 68W30
\end{abstract}

\section{Introduction}

Suppose we are given a birational map $\Mdashrow{\McalH}{\MbbP^2}{X\subset\MbbP^n}$.
\ALG{fam} computes all families of curves on $X$ of degree $\alpha$, 
projective dimension $\nu-1$ and geometric genus $\rho$.
If $\nu=1$, then each family consists of a single curve. 
We use the basepoint analyis method \citep[Algorithm~1~and~2]{nls-bp} to reduce 
the problem of computing curves in a surface
to the problem 
of computing elements in a unimodular lattice.
In order to clarify the input/output specification of \ALG{fam} we 
list some classical usecase examples that we can compute:
\begin{itemize}[topsep=0pt,itemsep=0pt]

\item Both 1-dimensional family of lines on a hyperboloid of one sheet.
These families were discovered by \citep[Christopher Wren, 1669]{wrn1}.

\item The 27 real and/or complex conjugated lines in a smooth cubic surface.
Cayley-Salmon theorem states the existence of these lines \citep[1848]{cay1}.

\item Four 1-dimensional families of circles on a ring torus.
Two of these families are families of Villarceau circles \citep[1848]{vil1}.

\item The 2-dimensional family of conics on the Roman surface.
This surface was discovered by \citep[Steiner, 1844]{stei} in Rome
and is a projection of the Veronese surface (\FIG{1}).

\item The four circles contained in the Roman surface (\FIG{1}).
\end{itemize}
See \citep[\texttt{linear\_series}]{linear_series} and \citep[\texttt{ns\_lattice}]{ns_lattice} for an
implementation of algorithms used in this paper.
\begin{figure}
\label{fig:1}
\begin{center}
\Mfig{4}{4.5}{roman-surface}\qquad\qquad
\Mfig{4}{4.5}{roman-circles} 
\end{center}
\textbf{Figure 1.}
{\it%
A web of conics and the four circles on the Roman surface.
}
\end{figure}
In \cite{diaz,sen2} algorithms are proposed to compute 
families of lines on $X$, in case $X$ is geometrically ruled.
Such families are represented in terms of reparametrizations of $\McalH$
and it is shown that basepoints of $\McalH$ can be moved to the line at infinity.
In \cite{alc1} an algorithm is proposed to compute straight lines in a rational surface,
using methods from differential geometry.
The more general algorithm proposed in his article, has the advantage 
we can also compute curves that are not reached by the parametrization $\McalH$ 
\citep[Section~4.1]{nls-bp}. 
However, as the parametric degree of $\Mdashrow{\McalH}{\MbbP^2}{X\subset\MbbP^n}$
and embedding dimension $n$ is increasing, our algorithm is less likely to terminate.
The bottleneck is the basepoint analysis step \citep[Remark~1]{nls-bp}.
It is likely that there exists rational surfaces where the methods of \cite{alc1,diaz,sen2} are favourable,
when computing straight lines contained in these surfaces.

We explain the structure of this paper by summarizing the main steps of \ALG{fam}.
After introducing basic notions in \SEC{pre},
we analyze in \SEC{comp-ns} the basepoints of the linear series corresponding to the map $\McalH$.
From these basepoints we recover generators for the Neron-Severi lattice $N(X)$.
In \SEC{comp-class} and \SEC{alg} we compute with \ALG{fam-class}
a set of candidate divisor classes $\McalA\subset N(X)$ using input $\alpha$ and $\rho$.
For each class $c\in \McalA$ we compute a linear series $|c|$ in the plane.
The linear series $|c|$ is valid if it is of the requested projective dimension $\nu-1$ 
and if the general curve in the linear series is irreducible. 
The curve $\McalH(C)\subset X$ 
is of given degree and geometric genus for all curves $C$ in a valid linear series $|c|$.
In \SEC{exm-alg} we explain \ALG{fam} with an example.
In \SEC{circles} we compute circles in rational surfaces, by
composing the birational map $\McalH$ with an inverse stereographic projection.

\section{Preliminaries}
\label{sec:pre}

\subsection{Real varieties}
\label{sec:real}

We define a \Mdef{real variety} $X$ as a complex variety together
with an antiholomorphic involution $\Mrow{\sigma}{X}{X}$, called the \Mdef{real structure}.
We implicitly assume that all structures are compatible with the 
real structure. For example, the birational map $\Mdashrow{\McalP}{\MbbP^2}{X}$ is real
unless explicitly stated otherwise.
The \Mdef{smooth model} of a surface~$X$ is a birational morphism $\Marrow{Y}{}{X}$ 
from a nonsingular surface~$Y$, that does not contract exceptional curves.

\subsection{Neron-Severi lattice}
\label{sec:ns}

For computational purposes, we make the data associated 
the well-known Neron-Severi lattice explicit \citep[page 461]{jha2}.
The \Mdef{Neron-Severi lattice} $N(X)$ of a rational surface~$X\subset \MbbP^n$ 
    consists of the following data:
\begin{enumerate}[topsep=0pt,itemsep=0pt]
\item
A unimodular lattice defined by divisor classes on the smooth model~$Y$ of $X$, modulo numerical equivalence.

\item
A basis for the lattice.   
We say that a basis is of \Mdef{type 1}
if the generators are 
$\Mmod{\Me_0,\Me_1,\ldots,\Me_r}$
\Mst 
the nonzero intersections are $e_0^2=1$ and $e_i^2=-1$ for $1\leq i\leq r$.
We assume a basis of type 1 unless explicitly stated otherwise.

\item
A unimodular involution $\Mrow{\sigma_*}{N(X)}{N(X)}$ induced by the real structure of~$X$.

\item
A function $\Mrow{h^0}{N(X)}{\MbbZ_{\geq0}}$ assigning the dimension of global sections
of the line bundle associated to a class.

\item
Two distinguished elements $\Mh,\Mk\in N(X)$ corresponding to
class of a hyperplane sections and the canonical class respectively.
\end{enumerate}
% We denote the class of a curve~$C\subset X$ in $N(X)$ by $[C]$.
% The class~$[F]\in N(X)$ of a linear series~$F$ is defined by the class of any curve
% in the linear series.

\subsection{Complete families}
\label{sec:fam}

We assume that $X\subset\MbbP^n$ is a rational surface 
\Mst linear-, algebraic- and numerical- equivalence of
divisors on~$X$ coincide. 

A \Mdef{family} is defined as a divisor $F\subset X\times\MbbP^m$ 
\Mst the second projection $\Mrow{\pi_2}{F}{\MbbP^m}$ is dominant.
We say that $F$ \Mdef{covers} $X$ if the first projection $\Mrow{\pi_1}{F}{X}$ is dominant as well.
A \Mdef{member} of $F$, 
corresponding to $i\in \MbbP^m$, is defined as the curve $F_i:=(\pi_1\circ\pi_2^{-1})(i)\subset X$.

We can associate to a curve $C\subset X$ its class $[C]\in N(X)$ \Mst
classes $[C]$ and $[C']$ are equal \Miff $C$ and $C'$ are members of the same family.
The class $[F]$ of $F$ is defined as the class of any of its members.

We call a family $F$ \MdefAttr{complete}{family} if there exists a curve $C\subset X$
\Mst the set $\Mset{C'\subset X}{ [C']=[C] }$ defines exactly the set of members of $F$.
A family is called \MdefAttr{irreducible}{family} if its general member is an irreducible curve.
The \MdefAttr{dimension}{family} of $F$ is defined as $m$.
A 0-dimensional family consists of a single curve.
If $F$ is complete then $h^0([F])=m+1$.
The \Mdef{arithmetic genus} $p_a([F])$ of a complete irreducible family $F$ is defined as the geometric genus 
of its general member. The \Mdef{degree} of $F$ is defined as the degree of any member 
\Mwrt the embedding $X\subset\MbbP^n$.

\section{Computing the Neron-Severi lattice}
\label{sec:comp-ns}

In this section
we explain in terms of an example, how to compute the Neron-Severi lattice of a rational surface.

We consider the following birational map:
\begin{equation}
\label{eqn:H}
\begin{array}{rcl}
             \MbbP^2       & \Mdasharrow{}{\McalH}{} & X\subset\MbbP^4 \\
             (x_0:x_1:x_2) & \longmapsto     & 
(
x_1^3+2x_2^2x_0-x_1x_0^2-2x_2x_0^2: 
x_1^2x_2:\\&& 
x_1x_2^2-2x_2^2x_0+2x_2x_0^2:
x_1x_2x_0-2x_2^2x_0+2x_2x_0^2:\\&&
x_2^3-2x_2^2x_0+x_2x_0^2
).
\end{array}
\end{equation}
Let $|\Mh|$ be the linear series associated to the birational map $\McalH$
so that $\Mh$ is the class of hyperplane sections of $X$.
With \citep[Algorithm 1]{nls-bp} we find the basepoints of $\McalH$ in the affine chart~$U_0\subset\MbbP^2$ 
defined by $x_0\neq 0$:
\[
\Gamma=\{~
\Mmod{((),p_1,1)},~
\Mmod{((),p_2,1)},~
\Mmod{((),p_3,1)},~
\Mmod{(),p_4,1},~
\Mmod{(p_5,t),p_2}~
\} 
\]
where $p_1=(-1,0)$, $p_2=(0,0)$, $p_3=(1,0)$, $p_4=(0,1)$ and $p_5=(2,0)$.
There are no basepoints outside $U_0$.
Notice that basepoints $p_1$, $p_2$ and $p_3$ are collinear simple basepoints 
corresponding to $(-1:0:1)$, $(0:0:1)$ and $(1:0:1)$ in~$\MbbP^2$ \Mresp.
Basepoint $p_4$ has projective coordinates $(0:1:1)$ and basepoint $p_5$
is infinitely near to $p_4$.

The Neron-Severi lattice of~$X$ has a basis of type 1:
\[
N(X)=\Mmod{\Me_0,\Me_1,\Me_2,\Me_3,\Me_4,\Me_5},
\]
The induced real structure
$\Mrow{\sigma_*}{N(X)}{N(X)}$ is the identity map, since all basepoints are real.
Moreover, 
\[
\Mh=-\Mk=3\Me_0-\Me_1-\Me_2-\Me_3-\Me_4-\Me_5,
\]
since $\Mh$ consists of cubic polynomials that pass with multiplicity one
through each of the five basepoints.
The smooth model~$Z$ of~$X$ is isomorphic to the projective plane blownup in the 
base locus of~$\McalH$:
\begin{equation*}
\begin{array}{r@{}c@{}l}
        &           Z                  &     \\
        &\tau_1\swarrow~~\searrow\tau_2 &     \\
\MbbP^2 & \Mdasharrow{}{\McalH}{}      & X          
\end{array}
\end{equation*}
% 
% 
% \begin{equation}
% \label{eqn:diagram}
% %\vspace{-1mm}
% \begin{array}{r@{~~}l@{}l}
%  Z      &                  &   \\
% \tau_1\downarrow & \searrow\tau_2 & \\    %\stackrel{\tau_2}{\searrow}         &   \\ 
%  \MbbP^2    & \Mdasharrow{}{\McalH}{}  & X
% \end{array}
% %\vspace{-1mm}
% \end{equation}
The generator $\Me_0$ is the class of the pullback along $\tau_1$, of lines in the plane 
and $\Me_i$ for $1\leq i\leq 5$ is the class of the pullback of the exceptional curve
resulting from the blowup of basepoint $p_i$.
The surface $Z$ is in general not a smooth model of $X$,
since $\tau_2$ may contract exceptional curves.
See \citep[Section 4.3]{nls-bp} for more details on computing the Neron-Severi lattice
of a rational surface.

\section{Computing families from their classes }
\label{sec:comp-class}

Suppose that $\Mdashrow{\McalH}{\MbbP^2}{X}$ is given by \EQN{H}
and that $F\subset X\times\MbbP^m$ is a complete family.
In this section we explain in terms of examples how to compute from the class $[F]\in N(X)$
the following attributes as defined in \SEC{fam} (see also \citep[Section 4.4]{nls-bp}):
\begin{itemize}[topsep=0pt, itemsep=0pt]
\item A complete linear series $L$ so that $\McalH(C)\subset X$ is a member of $F$ for all curves $C$ in $L$.
By abuse of notation we shall denote $[\McalH(C)]=[F]$ in $N(X)$ by $[L]$.
\item Dimension $h^0([F])-1$, arithmetic genus $p_a([F])$, and degree $\deg F$.
\item Reducibility of $F$.
\end{itemize}
We computed in \SEC{comp-ns} the Neron-Severi lattice $N(X)$ and
we know that 
\[
[F]=c=c_0\Me_0+c_1\Me_1+\ldots+c_5\Me_5 \in N(X),
\]
for some $c_i\in \MbbZ$ for $0\leq i\leq 5$.
Now there are two possibilities:
\begin{enumerate}[topsep=0pt, itemsep=0pt]
\item 
$c_0>0$ and $c_i\leq 0$ for $i>0$. The members of the family $F$
are in this case, via $\tau_{2*}\circ \tau_1^*$, strict transforms of curves in $\MbbP^2$ of degree $c_0$
that pass through basepoints $p_i$ with multiplicity $-c_i$.

\item 
$[F]=\Me_i$ for some $i>0$. In this case $F$ is a $0$-dimensional family
whose unique member is $\tau_2(E)\subset X$, where $E\subset Z$ is an exceptional curve
\Mst $\tau_1(E)=p_i\in\MbbP^2$.

\end{enumerate}
We remark that
if $p_i$ is infinitely near to $p_j$ then 
there is exists a curve in $Z$ with class $\Me_i-\Me_j$.
This \Mdef{(-2)-curve} is contracted to an isolated singularity by $\Mrow{\tau_2}{Z}{X}$.

We will now consider several scenarios, where we determine attributes of $F$
when only its class $[F]\in N(X)$ is given. 

\begin{itemize}[topsep=0pt, itemsep=0pt]

\item 
Suppose that $[F]=\Me_0-\Me_4-\Me_5$.
Recall that \citep[Algorithm 2]{nls-bp} 
determines which curves in a given linear series, 
form a linear series with prescribed basepoints.
The following linear series is defined by
a monomial basis for linear polynomials in~$\MbbR[u,v]$:
$G:=(u,v,1)$.
The line in $G$ that passes through basepoints $p_4$ and $p_5$
is after homogenization $L_{45}:=\Mset{x\in\MbbP^2}{ x_0-2x_1+2x_2=0 }$ \Mst $u=\frac{x_1}{x_0}$ and $v=\frac{x_2}{x_0}$.
Thus $[F]=[L_{45}]$ and 
the image~$\McalH(L_{45})\subset X$ is the 
unique member of~$F$. 
The linear series for $L_{45}$ consists of a single curve so that $F$ is of dimension $h^0([L_{45}])-1=0$.
We verify from the arithmetic genus formula that $2p_a([L_{45}])-2=[L_{45}]^2+[L_{45}]\cdot\Mk=-2$.
The degree of the image~$\McalH(L_{45})\subset \MbbP^4$ 
equals $\Mh\cdot [L_{45}]=1$, thus the unique member of $F$ is a line in $X$.

\item Suppose that $[F]=\Me_0-\Me_4$.
We compute with \citep[Algorithm 2]{nls-bp} the 
linear subseries of lines in $G$ that pass through $p_4$.
After projectivization this linear series is defined by the following tuple 
of linear forms $L_4:=(x_1,x_2-x_0)$ so that $h^0([L_4])=2$ and $[F]=[L_4]$. 
Let 
$C_\alpha:=\Mset{ x\in\MbbP^2 }{ \alpha_0x_1+\alpha_1(x_2-x_0)=0 }$
denote a line in the linear series of $L_4$ for all $\alpha=(\alpha_0:\alpha_1)\in\MbbP^1$. 
Since $\Mh\cdot [L_4]=2$ it follows that the image  $\McalH(C_\alpha)\subset X$
is a conic for all $\alpha\in\MbbP^1$. 
Indeed, $F$ defines a 1-dimensional complete family of conics on $X$.
We consider the following parametrization of $C_\alpha$:
\begin{equation*}
\begin{array}{rcl}
\MbbP^1\times\MbbP^1        & \Mdasharrow{}{\McalG}{} & \MbbP^2 \\
(\alpha_0:\alpha_1;t_0:t_1) & \longmapsto             & 
(
\alpha_0t_0:-\alpha_1t_1:\alpha_0t_1+\alpha_0t_0
).
\end{array}
\end{equation*}
Notice that $\Mdashrow{\McalH\circ\McalG}{\MbbP^1\times\MbbP^1}{X}$ 
is a reparametrization of $X$ \Mst if 
we fix $\alpha\in\MbbP^1$ then we obtain a parametrization of 
the conic $\McalH(C_\alpha)\subset X$.

\item 
Suppose that $[F]=\Me_0-\Me_1-\Me_2-\Me_3$.
The curve in $G$, through basepoints $p_1$, $p_2$ and $p_3$,
is after projectivization the line~$L_{123}:=\Mset{x\in\MbbP^2}{x_2=0}$
so that $h^0([L_{123}])=1$. However, $\Mh\cdot [L_{123}]=0$ 
so that $L_{123}$ is contracted by $\McalH$ to an isolated singularity.
In particular, $F$ does in this case not define a family on $X$.
Since $[L_4]\cdot[L_{123}]=1$ it follows that each member of 
the family of conics with class $[L_4]$, passes through this isolated singularity.

\item
Suppose that $[F]=\Me_0-\Me_1-\Me_5$.
A necessary condition for $c\in N(X)$ being the class of a line 
in $X$, is that $\Mh\cdot c=1$. However, this condition is not sufficient. 
In this case we have $[F]=(\Me_0-\Me_1-\Me_4)+(\Me_4-\Me_5)$ so that $\Mh\cdot [F]=1$, 
but $[F]$ cannot be the class of a line through $p_1$ and an infinitely near point $p_5$, 
since such a line also passes through $p_4$.
We can detect in $N(X)$ that $[F]$ is not the class of a line,
since $[F]\cdot (\Me_4-\Me_5)<0$ and $h^0(\Me_4-\Me_5)>0$ so that $\Me_4-\Me_5$
is the class of a (-2)-curve.
In particular, we find that $F$ is not irreducible.
Notice that the curve with class $\Me_4-\Me_5$ in the smooth model~$Z$
is contracted by $\tau_2$ to an isolated singularity of~$X$. 
The line $\McalH(L_{14})\subset X$ with class $\Me_0-\Me_1-\Me_4$ passes 
through this singular point, since $(\Me_0-\Me_1-\Me_4)\cdot (\Me_4-\Me_5)=1$.

\item 
Suppose that $[F]=2\Me_0-\Me_1-\Me_2-\Me_3-\Me_4-\Me_5$.
In this case, $[F]$ is the class of the pullback of a conic 
$C\subset \MbbP^2$ along $\Mrow{\tau_1}{Z}{\MbbP^2}$. 
The conic $C$ passes through $p_1$, $p_2$, $p_3$ and $p_4$
\Mst its tangent direction at $p_4$ is determined by the infinitely near point 
$p_5$. We notice that $[F]\cdot (\Mh-\Me_1-\Me_2-\Me_3)<0$ and therefore 
the line $L_{123}$ through $p_1$, $p_2$ and $p_3$ must be a
component of $C$. Indeed, $[F]=[L_{45}]+[L_{123}]$. 
We compute with \citep[Algorithm 2]{nls-bp}
the curves in the linear series of all conics 
that pass through $p_1,\dots, p_5$ and verify that
there is a unique reducible conic with equation $x_2(x_0-2x_1+2x_2)$. 
Thus $F$ consists of a reducible conic and is therefore a reducible family.

\item
Suppose that $[F]=2\Me_0-\Me_1-\Me_2-\Me_3-\Me_4$.
In this example, $\Mh\cdot [F]=2$.
We compute with \citep[Algorithm 2]{nls-bp}
the curves in the linear series of all conics 
that pass through $p_1,\dots, p_4$ and obtain
after projectivization the reducible linear series $\bigl(x_2x_1,x_2(x_2-x_0)\bigr)$.
We notice that $[F]\cdot [L_{123}]<0$ and thus $[L_{123}]$ is indeed a
fixed component of the linear series of conics passing through 
the simple basepoints $p_1$, $p_2$, $p_3$ and $p_4$.
We conclude that $F$ is a reducible family of conics.

\end{itemize}

\section{Algorithms}
\label{sec:alg}
We introduce in this section algorithms for computing complete irreducible families 
on rational surfaces of prescribed degree, dimension and arithmetic genus.

\begin{algorithm}
\textbf{(find elements in Neron-Severi lattice)}  
\label{alg:fam-class} 
\begin{itemize}[itemsep=0pt,topsep=0pt]%[leftmargin=-.5in]

\item \textbf{Input:}
\begin{enumerate}[itemsep=0pt,topsep=0pt]
\item 
The class of hyperplane sections $\Mh=\Mh_0\Me_0+\ldots+\Mh_r\Me_r$ in a
Neron-Severi lattice $N(X)$
\Mwrt
type 1 basis $\Mmod{\Me_0,\ldots,\Me_r}$.

\item
Integers $\alpha,\beta \in\MbbZ_{\geq 0}$.
\end{enumerate}

\item \textbf{Output:}
\\
The set of classes $c\in N(X)$ \Mst 
$\Mh\cdot c=\alpha$, $c^2=\beta$
and either one of the following three conditions is satisfied:
\begin{enumerate}[itemsep=0pt,topsep=0pt]
\item $c=\Me_i-\Me_j$ for some $0<i,j\leq r$.
\item $c=\Me_i$ for some $i>0$.
\item $c\cdot \Me_i>0$ for all $0\leq i\leq r$.
\end{enumerate}

\item \textbf{Method:} {\it We use notation $c=c_0\Me_0+\ldots+c_r\Me_r$}.

\item[] $\McalS:=\Mset{ c\in N(X) }{ 
c\in\{\,\Me_i-\Me_j, \Me_i \,|\, 0<i,j\leq r \,\},\,
(\Mh\cdot c,c^2)=(\alpha,\beta)}$

\item[]
$f(t):=(h_0 t-\alpha)^2-(h_0^2 - h^2)(t^2 - \beta)$ 

\item[]
$c_0:=1$

\item[] 
\textbf{while} 
\quad$f(c_0)\leq 0$\quad 
\textbf{or} 
\quad$f(c_0)\leq f(c_0-1)$\quad 
% $(h_0c_0-\alpha)^2
% \leq
% (h_0^2 - h^2)(c_0^2 - \beta)$
\textbf{do}
\begin{itemize}[leftmargin=1cm,itemsep=0pt,topsep=0pt]

\item[]
Compute all $(c_i)_{i>0}\in\MbbZ^r_{\geq0}$ \Mst 
$\Mh_0c_0-\alpha=\Msum{i>0}{}\Mh_ic_i$ by going 
through integer partitions of $\Mh_0c_0-\alpha$.

\item[]  $\McalS:=\McalS\cup\Mset{c}{\Mh_0c_0-\alpha=\Msum{i>0}{}\Mh_ic_i,~ c^2=\beta}$

\item[]  $c_0:=c_0+1$
\end{itemize}

\item[] \textbf{return} $\McalS$
\Mend
\end{itemize}
\end{algorithm}

\begin{algorithm}
\textbf{(complete families on rational surfaces)} 
\label{alg:fam}
\begin{itemize}[itemsep=0pt,topsep=0pt]%[leftmargin=-.5in]

\item \textbf{Input:} 
A birational map $\Mdashrow{\McalH}{\MbbP^2}{X\subset\MbbP^n}$
and integers $\alpha,\nu,\rho \in\MbbZ_{\geq 0}$.

\item \textbf{Output:}
Classes of complete irreducible families $F\subset X\times \MbbP^{\nu-1}$
with prescribed degree $\alpha$, dimension $\nu-1$ and geometric genus $\rho$:
\[
\Mset{ [F]\in N(X) }{ \deg F=\alpha,~ h^0([F])=\nu,~ p_a([F])=\rho,~ F \text{ is irreducible} }.
\]

\item\textbf{Method:}
\begin{enumerate}[topsep=0pt,itemsep=0pt]

\item
Compute $N(X)$ with type 1 basis $\Mmod{e_0,\ldots,e_r}$
by computing the base locus of $\McalH$ \citep[Algorithm 1 and Section 4.3]{nls-bp}
\Mst $\Mh \in N(X)$ is the class of the linear series associated to $\McalH$.

\item
We call \ALG{fam-class} with input $\Mh$, $\alpha$ and $\beta$,
where $\beta$ runs from $-2$ to  $\nu+\rho-2$. 
Let $\McalA$ be the union of the outputs for each $\beta$.
Each element in $\McalA$ is a vector \Mwrt the type 1 basis of $N(X)$.

\item 
We start with an empty set $\McalB$.
With \citep[Algorithm 2]{nls-bp} we compute
for each class in $\McalA$ its linear series $L$.
If $h^0([L])=\nu$ and $L$ is irreducible without unassigned basepoints in $\McalH$, 
then we set $\McalB:=\McalB\cup\{[L]\}$.
\end{enumerate}

\item[] \textbf{return} $\McalB$
\Mend
\end{itemize}
\end{algorithm}

Notice that we discussed step 3 of \ALG{fam} in \SEC{comp-class}.
We will see in \EXM{roman} a scenario
where the linear series has unassigned basepoints in $\McalH$.

\begin{theorem}
\textbf{(complete families on rational surfaces algorithm)}
\label{thm:fam}
\\
\ALG{fam-class} and \ALG{fam} are correct.
\end{theorem}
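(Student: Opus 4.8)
The plan is to prove, for each of the two algorithms, that it terminates on every admissible input and that the set it returns is exactly the set described in its output specification; the correctness of \ALG{fam} will then follow from that of \ALG{fam-class} together with a bound on the self-intersection parameter $\beta$.

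For \ALG{fam-class} I would first dispose of termination. The polynomial $f(t)=(\Mh_0 t-\alpha)^2-(\Mh_0^2-\Mh^2)(t^2-\beta)$ has leading coefficient $\Mh_0^2-(\Mh_0^2-\Mh^2)=\Mh^2=\deg X>0$, so it is a strictly convex quadratic; hence $f$ is positive and strictly increasing for all sufficiently large $t$, the while-loop guard ``$f(c_0)\le0$ or $f(c_0)\le f(c_0-1)$'' fails after finitely many iterations, and the integer-partition enumeration inside each iteration is finite. For soundness I would observe that the classes put into $\McalS$ before the loop are exactly the $\Me_i-\Me_j$ and $\Me_i$ with $(\Mh\cdot c,c^2)=(\alpha,\beta)$ — output shapes (1) and (2) — while every class $c=c_0\Me_0-\sum_{i>0}c_i\Me_i$ added inside the loop has $c_0\ge1$, $c_i\ge0$, hence $c\cdot\Me_i\ge0$ for all $i$, together with $(\Mh\cdot c,c^2)=(\alpha,\beta)$ by construction — shape (3). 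The crux is completeness. Given $c$ of shape (3) with $\Mh\cdot c=\alpha$ and $c^2=\beta$, write $c=c_0\Me_0-\sum_{i>0}c_i\Me_i$; then $\sum_{i>0}\Mh_i c_i=\Mh_0 c_0-\alpha$ and $\sum_{i>0}c_i^2=c_0^2-\beta$, so Cauchy--Schwarz applied to $(\Mh_i)_{i>0}$ and $(c_i)_{i>0}$ — equivalently the Hodge index theorem on $\langle\Mh,c\rangle$, using $\sum_{i>0}\Mh_i^2=\Mh_0^2-\Mh^2$ — gives $(\Mh_0 c_0-\alpha)^2\le(\Mh_0^2-\Mh^2)(c_0^2-\beta)$, i.e. $f(c_0)\le0$. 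Since $f$ is convex, the guard holds at every integer in $[1,c_0]$ — via $f\le0$ to the right of the vertex and via $f(c_0)\le f(c_0-1)$ to the left — so the loop does reach this value of $c_0$; there it runs over all integer partitions of $\Mh_0 c_0-\alpha$, in particular meeting the vector $(c_i)_{i>0}$, and the test $c^2=\beta$ inserts $c$ into $\McalS$.

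For \ALG{fam} I would argue in three steps. First, by the part just proved, \ALG{fam-class} called with $(\Mh,\alpha,\beta)$ returns exactly the admissible classes with $\Mh\cdot c=\alpha$ and $c^2=\beta$, so $\McalA$ collects all of these for $-2\le\beta\le\nu+\rho-2$. Second, I would show that the class $c=[F]$ of every complete irreducible family $F\subset X\times\MbbP^{\nu-1}$ with $\deg F=\alpha$, $h^0([F])=\nu$ and $p_a([F])=\rho$ lies in $\McalA$: the degree condition is $\Mh\cdot c=\deg F=\alpha$; $c$ has one of the three admissible shapes by the case analysis at the start of \SEC{comp-class} (a $(-2)$-class $\Me_i-\Me_j$, an exceptional class $\Me_i$, or a plane-curve class with $c\cdot\Me_i\ge0$ for all $i$); and combining Riemann--Roch $\chi(c)=\tfrac{1}{2}(c^2-c\cdot\Mk)+1$ with adjunction $2\rho-2=c^2+c\cdot\Mk$ for the general member of $F$ yields $c^2=\nu+\rho-2-h^1(c)+h^2(c)$, whence $c^2\le\nu+\rho-2$ because $h^2(c)=h^0(\Mk-c)=0$ and $h^1(c)\ge0$, while $c^2\ge0$ for a positive-dimensional family and $c^2\in\{-1,-2\}$ for the rigid negative curves covered by shapes (1) and (2). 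Third, step 3 uses \citep[Algorithm 2]{nls-bp} to compute, for each $c\in\McalA$, the linear series $L$ it defines in the plane and to detect reducibility and unassigned basepoints of $\McalH$; by the properties of that algorithm, ``$h^0([L])=\nu$ and $L$ irreducible without unassigned basepoints in $\McalH$'' holds precisely when $c$ is the class of a complete irreducible family $F\subset X\times\MbbP^{\nu-1}$, and then $\deg F=\Mh\cdot c=\alpha$ and $p_a([F])=\rho$ follow from the choice of $(\alpha,\beta)$ and adjunction. Combining the three steps, $\McalB$ is exactly the specified output set.

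The hardest part will be the second step for \ALG{fam}: confirming that the single range $-2\le\beta\le\nu+\rho-2$, together with the shape classification, is genuinely exhaustive. This requires the cohomological vanishing $h^2(c)=0$, a careful identification of the geometric genus used to define $p_a([F])$ with the arithmetic genus of the corresponding class on the smooth model — that is, smoothness of the general member of the relevant linear series, by Bertini away from the resolved basepoints, so that $2\rho-2=c^2+c\cdot\Mk$ is available — and a verification that no irreducible curve of self-intersection below $-2$ occurs for the surfaces in scope. A secondary delicate point, resting on \cite{nls-bp} rather than on anything in this excerpt, is that the filter of step 3 is simultaneously sound and complete: that ``irreducible linear series of the prescribed projective dimension with no unassigned basepoints of $\McalH$'' characterises exactly the complete irreducible families of dimension $\nu-1$.
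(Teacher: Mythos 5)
Your proposal is correct and follows essentially the same route as the paper: Cauchy--Schwarz (equivalently the signature of the type~1 basis) to bound $c_0$ and justify the loop guard in \ALG{fam-class}, and adjunction plus Riemann--Roch with Serre duality to confine $\beta=c^2$ to $[-2,\nu+\rho-2]$ in \ALG{fam}, with the final filtering deferred to \citep[Algorithm~2]{nls-bp}. You are somewhat more explicit than the paper about the convexity of $f$ (via $\Mh^2>0$) and the vanishing $h^2(c)=0$, but these are elaborations of the same argument rather than a different proof.
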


\begin{proof}
The halting of \ALG{fam-class} follows from the 
Cauchy-Schwarz inequality:
\[
(\Mh_0c_0-\alpha)^2=
\left(\Msum{i>0}{}\Mh_ic_i\right)^2 
\leq
\left(\Msum{i>0}{}\Mh_i^2\right)\left(\Msum{i>0}{}c_i^2\right)
=
(\Mh_0^2 - \Mh^2)(c_0^2 - \beta).  
\]
Thus we require that $f(c_0)\leq 0$. 
Notice that if $f(c_0)> f(c_0-1)$ and $f(c_0)>0$, then $f(t)>0$ for all $t\geq c_0$.   
The correctness of \ALG{fam-class} is now straightforward.

For the correctness of \ALG{fam} we recall that
$\alpha=\deg F=\Mh\cdot [F]$.
The arithmetic genus formula states that $-2\leq c^2+c\cdot \Mk=2p_a(c)-2$.
It follows from
Riemann-Roch theorem and Serre duality that 
$-2\leq c^2-c\cdot \Mk \leq 2h^0(c)-2$.
The sum of these equations implies that 
\[-2\leq c^2\leq h^0(c)+p_a(c)-2.\]
Thus $c^2=\beta\in[-2,\nu+\rho-2]$ and \ALG{fam-class}
outputs potential classes of complete families. 
The final step of \ALG{fam} consists of verifying with \citep[Algorithm 2]{nls-bp},
which of these classes form complete irreducible families. 
\end{proof}

\section{Computing lines and conics on a surface}
\label{sec:exm-alg}

In \SEC{comp-class} we computed some lines and a family of conics on $X$ from
a given birational map $\Mdashrow{\McalH}{\MbbP^2}{X}$ as defined at \EQN{H}.
In this section we would like to compute \emph{all} lines and families of conics 
of $X$.

For computing all lines contained in $X$,
we call \ALG{fam} with input $\McalH$, $\alpha=1$, $\nu=1$ and $\rho=0$.
In the first step of the algorithm we compute $N(X)$ as discussed in \SEC{comp-ns}.
Since the real structure $\Mrow{\sigma}{X}{X}$ acts as the identity on $N(X)$,
all lines will be real.
In the second step we call \ALG{fam-class} with 
$\alpha=1$, $\beta\in [-2, -1]$ so that the union of outputs is 
\[
\McalA=\Mset{ 
\Me_i,~~ \Me_0-\Me_i-\Me_j,~~ 2\Me_0-\Me_1-\ldots-\Me_5 
~}{~ 
0<i<j\leq 5}.
\]
In the third step we verify which of these classes 
correspond to complete irreducible families (see \SEC{comp-class}).
Thus we verify whether the general curve in the linear series of a class is irreducible.
The output of \ALG{fam} is:
\[
\McalB = \{ 
\Me_1,~~
\Me_2,~~
\Me_3,~~
\Me_4,\quad
\Me_0-\Me_1-\Me_4,\quad 
\Me_0-\Me_2-\Me_4,\quad 
\Me_0-\Me_3-\Me_4,\quad 
\Me_0-\Me_4-\Me_5 
\}.
\]
It follows that $X$ contains $8$ lines.
A line with class $\Me_i$ for $i>0$ is not reachable by the birational map $\McalH$.
See \citep[Section 4.1]{nls-bp} for parametrizing such unreachable curves.
For the remaining classes we can compute the linear series in $\MbbP^2$ so
that the images via $\McalH$ of curves in this linear series are lines in $X$.

For computing all conics in $X$ we call \ALG{fam} with 
$\McalH$, $\alpha=2$, $\nu=2$ and $\rho=0$. 
In this case $\McalA=\Mset{\Me_0-\Me_i, 2\Me_0-\Me_i-\Me_j-\Me_k }{ 0<i<j<k\leq 5 }$
and the output of \ALG{fam} is 
\begin{center}
$\McalB$ $=$ 
$\{$ 
$\Me_0-\Me_1$,\quad  
$\Me_0-\Me_2$,\quad  
$\Me_0-\Me_3$,\quad  
$\Me_0-\Me_4$,\quad  
$2\Me_0-\Me_1-\Me_2-\Me_4-\Me_5$,\quad  
$2\Me_0-\Me_1-\Me_3-\Me_4-\Me_5$,\quad 
$2\Me_0-\Me_2-\Me_3-\Me_4-\Me_5$  
$\}$. 
\end{center}
In \SEC{comp-class} we parametrized conics with class $\Me_0-\Me_4$. 
The remaining families of conics can similarly be represented by a parametrization.

\section{Computing circles on surfaces}
\label{sec:circles}

In order to compute circles on rational surfaces we consider the 
\Mdef{M\"obius sphere}
\[
\MbbS^n:=\Mset{ x\in\MbbP^{n+1} }{-x_0^2+x_1^2+\ldots+x_{n+1}^2=0}.
\]
A \Mdef{circle} in $\MbbS^n$ is defined as a real irreducible conic.
The M\"obius sphere is topologically the projective closure of the one-point-compactification $S^n$
of $\MbbR^n$, \Mst circles and lines in $\MbbR^n$ correspond to circles in $\MbbS^n$.
The \Mdef{stereographic projection} with center $(1:0:\ldots:0:1)\in\MbbS^n$ is defined as
\begin{equation*}
\begin{array}{rcl}
\MbbS^n              & \Mdasharrow{}{\McalP}{} & \MbbP^n \\
(x_0:\ldots:x_{n+1}) & \longmapsto             & 
(x_0-x_{n+1}:x_1:\ldots:x_n),
\end{array}
\end{equation*}
with inverse
\begin{equation*}
\begin{array}{rcl}
\MbbP^n          & \Mdasharrow{}{\McalP^{-1}}{} & \MbbS^n \\
(y_0:\ldots:y_n) & \longmapsto                  & 
(
y_0^2+\Delta:
2y_0y_1:
\ldots:
2y_0y_n:
-y_0^2+\Delta
),
\end{array}
\end{equation*}
where $\Delta:=y_1^2+\ldots+y_n^2$.
We can recover $\MbbR^n$ from $\MbbS^n$ as the affine chart 
$\MbbR^n\cong\Mset{x\in \MbbP^n=\overline{\McalP(\MbbS^n)}}{ x_0\neq 0}$.
Notice that circles in $\MbbS^n$ that pass through the center of stereographic projection 
correspond to lines in $\MbbP^n$.

\begin{example}
{\bf(Roman surface)}
\label{exm:roman}
\\
We consider the Roman surface (see \FIG{1}) with parametrization
\begin{equation*}
\begin{array}{rcl}
\MbbP^2       & \Mdasharrow{}{\McalR}{} & X\subset\MbbP^3 \\
(x_0:x_1:x_2) & \longmapsto             & 
(x_0^2+x_1^2+x_2^2 : -x_0x_1 : -x_1x_2 : x_0x_1).
\end{array}
\end{equation*}
The inverse stereographic projection of the roman surface $X\subset\MbbP^3$ into $\MbbS^3$
has parametrization
\begin{equation*}
{%\tiny
\begin{array}{rcl}
\MbbP^2       & \Mdasharrow{}{\McalP^{-1}\circ\McalR}{} & Y\subset\MbbS^3 \\
(x_0:x_1:x_2) & \longmapsto             & 
\Bigl( x_1^4 + 3x_1^2x_2^2 + x_2^4 + 3x_1^2x_0^2 + 3x_2^2x_0^2 + x_0^4:\\
&& -2 x_0  x_1  (x_1^2 + x_2^2 + x_0^2) : -2 x_2  x_1  (x_1^2 + x_2^2 + x_0^2) :\\ 
&& 2  x_0  x_2  (x_1^2 + x_2^2 + x_0^2) : \\
&& -(x_1^4 + x_1^2x_2^2 + x_2^4 + x_1^2x_0^2 + x_2^2x_0^2 + x_0^4) \Bigr).
\end{array}
}
\end{equation*}
We expect a finite number of circles on the Roman surface $X\subset\MbbP^3$ and 
thus we assume that $\nu-1=0$. 
We call \ALG{fam} with $\McalP^{-1}\circ\McalR$ and $(\alpha,\nu,\rho)=(2,1,0)$.

In step 1 of \ALG{fam} we perform with \citep[Algorithm 1]{nls-bp} 
a basepoint analysis on the map $\McalP^{-1}\circ\McalR$ and find 
the following simple basepoints in $\MbbP^2$
where $\eta^2-\eta+1=0$: 
\begin{center}
$
\begin{array}{ll}
p_1:=(1:\eta - 1: -\eta    ), & p_2:=(1:-\eta: \eta - 1    ), \\
p_3:=(1:-\eta + 1: \eta    ), & p_4:=(1:\eta: -\eta + 1    ), \\
p_5:=(1:\eta - 1: \eta     ), & p_6:=(1:-\eta: -\eta + 1   ),  \\
p_7:=(1:-\eta + 1: -\eta   ), & p_8:=(1:\eta: \eta - 1     ).  
\end{array}
$
\end{center}
Notice that $p_i$ is complex conjugate to $p_{i+1}$ for $i\in\{1,3,5,7\}$.
We denote the exceptional divisor, that results from blowing up the 
projective plane with center $p_i$, by $\Me_i$ for $1\leq i\leq 8$.
The Neron-Severi lattice of $Y\subset\MbbS^3$ is generated by $N(Y)=\Mmod{\Me_0,\Me_1,\ldots,\Me_8}$
with $\sigma_*(\Me_0)=\Me_0$ and $\sigma_*(\Me_i)=\Me_{i+1}$ for $1\leq i\leq 7$.
The class of hyperplane sections of $Y$ equals $\Mh=4\Me_0-\Me_1-\ldots-\Me_8$.

In step 2 of \ALG{fam} we call \ALG{fam-class} with $\Mh\in N(Y)$ and $(\alpha,\beta)=(2,-2)$.
The output is $\Mset{\Mh-2\Me_0+\Me_i+\Me_j}{1\leq i<j\leq 8}$. 
Next we call \ALG{fam-class} with $\Mh\in N(Y)$ and $(\alpha,\beta)=(2,-1)$.
In this case, the output is $\Mset{\Me_0-\Me_i-\Me_j}{1\leq i<j\leq 8}$.

In step 3 of \ALG{fam} we compute the linear
series of $2\Me_0-\Me_1-\ldots-\Me_6$. This linear
series consists of a single curve 
$C:=\Mset{x\in\MbbP^2}{ x_0^2+x_1^2+x^2=0 }$.
However, we detect that $p_i\in C$ for $1\leq i\leq 8$ and
thus the linear series has unassigned basepoints $p_7$ and $p_8$.
Therefore the class of the linear series is $2\Me_0-\Me_1-\ldots-\Me_8$,
which is not the class of a circle.
Since we want to compute the real circles we require that 
the classes are fixed under the involution. 
It follows that the remaining candidate classes for circles 
are $\Mset{\Me_0-\Me_i-\Me_{i+1}}{i\in\{1,3,5,7\}}$.
The linear series $L_{12}$ with class $\Me_0-\Me_1-\Me_2$
is generated by the tuple $(x_0+x_1+x_2)$
so that the the parametrization of $\Mset{x\in\MbbP^2}{x_0+x_1+x_2=0}$
composed with the map $\Mrow{\McalP}{\MbbP^2}{X}$, is the 
parametrization of a circle in $X$.
Similarly, we obtain for the remaining classes 
the linear series $L_{34}=(x_0-x_1-x_2)$, 
$L_{56}=(x_0+x_1-x_2)$ and $L_{78}=(x_0-x_1+x_2)$.
\Mend                                  
\end{example}

\begin{remark}
{\bf(circles versus lines)}
\\ 
Suppose that $\Mdashrow{\McalH}{\MbbP^2}{X\subset\MbbP^4}$ is as defined at \EQN{H}.
In order to compute circles in $X$ we compute with \ALG{fam} complete irreducible families 
of conics in the surface with parametrization 
$\Mdashrow{\McalP^{-1}\circ\McalH}{\MbbP^2}{Y\subset \MbbS^4}$.
We expect at least 8 circles in $Y$, which are the image of $\McalP^{-1}$
of the 8 real lines in~$X$ we computed in \SEC{exm-alg}. The circles in~$Y$, that do not pass through
the center~$(1:0:0:0:0:1)$ of the stereographic projection $\Mdashrow{\McalP}{\MbbS^4}{\MbbP^4}$, 
correspond to circles in~$X$.
\Mend
\end{remark}

\begin{remark}
{\bf(alternative approach)}
\\ 
In case \citep[Algorithm 1]{nls-bp} does not terminate there is an alternative method to compute circles. 
Suppose that $\Mdashrow{\McalH}{\MbbP^2}{X\subset\MbbP^4}$ is as defined at \EQN{H}.
We consider the finite set $B:=\McalH^{-1}(X\cap A)\subset\MbbP^2$ 
where $A:=\Mset{x\in\MbbP^4}{x_0=x_1^2+x_2^2+x_3^2+x_4^2}$ is the \Mdef{Euclidean absolute}.
The real irreducible conics in $\MbbP^4$ that meet $A$ in two complex conjugate points correspond to circles. 
Thus if $C\subset X$ is a real irreducible conic and $\# \McalH^{-1}(C)\cap B=2$ then $C$ is a circle.
For example, we compute with \citep[Algorithm 2]{nls-bp} the curves in the linear series $L_4$ of \SEC{comp-class} that pass through
two given complex conjugate points of $B$.
\Mend
\end{remark}

\section{Acknowledgements}

I thank Josef Schicho for interesting comments. \FIG{1} was created with 
\citep[Surfex]{surfex} and \citep[Povray]{povray}.
We used \citep[Sage]{sage} for the computations. 
Our implementations are open source \cite{linear_series,ns_lattice}.

\bibliography{geometry}

\begin{thebibliography}{14}
\providecommand{\natexlab}[1]{#1}
\providecommand{\url}[1]{\texttt{#1}}
\expandafter\ifx\csname urlstyle\endcsname\relax
  \providecommand{\doi}[1]{doi: #1}\else
  \providecommand{\doi}{doi: \begingroup \urlstyle{rm}\Url}\fi

\bibitem[Alc{\'a}zar and Caravantes(2018)]{alc1}
J.G. Alc{\'a}zar and J.~Caravantes.
\newblock On the computation of the straight lines contained in a rational
  surface.
\newblock \emph{arXiv:1603.03959}, 2018.

\bibitem[Cayley(1848)]{cay1}
A.~Cayley.
\newblock On the triple tangent planes of surfaces of the third order.
\newblock \emph{Cambridge and Dublin Math. J.}, 4:\penalty0 118–138, 1848.

\bibitem[Griffiths and Harris(1978)]{jha2}
P.~Griffiths and J.~Harris.
\newblock \emph{Principles of algebraic geometry}.
\newblock Wiley-Interscience, New York, 1978.

\bibitem[Holzer and Labs(2008)]{surfex}
S.~Holzer and O.~Labs.
\newblock {\sc surfex 0.90}.
\newblock Technical report, University of Mainz, University of Saarbr\"ucken,
  2008.
\newblock \url{www.surfex.AlgebraicSurface.net}.

\bibitem[Lubbes(2017{\natexlab{a}})]{linear_series}
N.~Lubbes.
\newblock Sage library for basepoint analysis of linear series,
  2017{\natexlab{a}}.
\newblock \url{http://github.com/niels-lubbes/linear_series}.

\bibitem[Lubbes(2017{\natexlab{b}})]{ns_lattice}
N.~Lubbes.
\newblock Sage library for computations in {N}eron-{S}everi lattice,
  2017{\natexlab{b}}.
\newblock \url{http://github.com/niels-lubbes/ns_lattice}.

\bibitem[Lubbes(2018)]{nls-bp}
N.~Lubbes.
\newblock Computing basepoints of linear series in the plane.
\newblock \emph{arXiv:1805.03452}, 2018.

\bibitem[Per(2004)]{povray}
\emph{Povray}.
\newblock Persistence of Vision Pty. Ltd., 2004.
\newblock \url{http://www.povray.org/download/}.

\bibitem[Sendra et~al.(2017)Sendra, Sevilla, and Villarino]{sen2}
J.R. Sendra, D.~Sevilla, and C.~Villarino.
\newblock Covering rational ruled surfaces.
\newblock \emph{Math. Comp.}, 86\penalty0 (308):\penalty0 2861--2875, 2017.

\bibitem[Shen and P\'erez-D\'iaz(2014)]{diaz}
L.Y. Shen and S.~P\'erez-D\'iaz.
\newblock Characterization of rational ruled surfaces.
\newblock \emph{J. Symbolic Comput.}, 63:\penalty0 21--45, 2014.

\bibitem[Stein et~al.(2012)]{sage}
W.\thinspace{}A. Stein et~al.
\newblock \emph{{S}age {M}athematics {S}oftware}.
\newblock The Sage Development Team, 2012.
\newblock \url{http://www.sagemath.org}.

\bibitem[Steiner(1881)]{stei}
J.~Steiner.
\newblock Jacob steiner's gesammelte werke.
\newblock 1881.

\bibitem[Villarceau(1848)]{vil1}
Y.~Villarceau.
\newblock Theoreme sur le tore.
\newblock \emph{Nouvelles annales de mathematiques}, 7:\penalty0 345--347,
  1848.
\newblock \url{http://eudml.org/doc/95880}.

\bibitem[Wren(1669)]{wrn1}
C.~Wren.
\newblock Generatio corporis cylindroidis hyperbolici, elaborandis lenti bus
  hyperbolicis accommodati.
\newblock 4:\penalty0 961--962, 1669.

\end{thebibliography}

\paragraph{address of author:}
Johann Radon Institute for Computational and Applied 
Mathematics (RICAM), Austrian Academy of Sciences
\\
\textbf{email:} niels.lubbes@gmail.com

%\printindex
\end{document}